\theoremstyle{theorem}
\newtheorem{Thm}{Theorem}[section]
\newtheorem{Lem}[Thm]{Lemma}
\newtheorem{Def}[Thm]{Definition}
\newtheorem{Rem}[Thm]{Remark}
\theoremstyle{remark}
\DeclareMathAlphabet{\mathcal}{OMS}{cmsy}{m}{n}
\newcommand{\bm}[1]{\boldsymbol{#1}}
\newcommand{\RNum}[1]{\uppercase\expandafter{\romannumeral #1\relax}}
\begin{document}
	\title{False Data Injection Attacks on Hybrid AC/HVDC \\
		Interconnected Systems with Virtual Inertia -- \\ Vulnerability, Impact and Detection}
	
	\author{Kaikai~Pan$^{*}$,
		    Elyas~Rakhshani,
	        and~Peter~Palensky
	\thanks{The authors are with the Delft University of Technology, The Netherlands. ($^{*}$ Correspondence: \tt{kaikaipan15}@gmail.com)} 
	}
	\maketitle 

\begin{abstract}
	Power systems are moving towards hybrid AC/DC grids with the integration of HVDC links, renewable resources and energy storage modules. New models of frequency control have to consider the complex interactions between these components. Meanwhile, more attention should be paid to cyber security concerns as these control strategies highly depend on data communications which may be exposed to cyber attacks. In this regard, this article aims to analyze the false data injection (FDI) attacks on the AC/DC interconnected system with virtual inertia and develop advanced diagnosis tools to reveal their occurrence. We build an optimization-based framework for the purpose of vulnerability and attack impact analysis. Considering the attack impact on the system frequency stability, it is shown that the hybrid grid with parallel AC/DC links and emulated inertia is more vulnerable to the FDI attacks, compared with the one without virtual inertia and the normal AC system. We then propose a detection approach to detect and isolate each FDI intrusion with a sufficient fast response, and even recover the attack value. In addition to theoretical results, the effectiveness of the proposed methods is validated through simulations on the two-area AC/DC interconnected system with virtual inertia emulation capabilities. 
\end{abstract}

\begin{IEEEkeywords}
	AC/DC system, virtual inertia emulation, frequency control, false data injection attacks
\end{IEEEkeywords}

\IEEEpeerreviewmaketitle

\section{Introduction}\label{sec:intro}

\IEEEPARstart{R}{ecent} advances in power electronics have made HVDC links and renewable energy resources (RES) more popular in power system applications \cite{Rakhshani2016a}. To better support the frequency control in the modern scenarios, several efforts have been carried out to develop different approaches for inertia emulation tasks \cite{Xu2018,Dhingra2018}. Besides, this transformation does not only lead to more adaptation of conventional control schemes such as the primary and secondary frequency control to handle complex interactions between these components, but also introduce an increasing dependence on data communications. The new model of frequency control must consider hybrid AC/HVDC multi-area systems incorporating virtual inertia emulators \cite{Mosca2019}. Besides, such control process would also rely heavily on communication networks and numerous cyber devices to achieve reliable and in-time data exchange. However, the corresponding communication channels are lack of sufficient cyber security mechanisms, and the wide communication surface of the hybrid AC/DC grids makes them more exposed to cyber intrusions, including false data injection (FDI) attacks \cite{Liang2017a, Sridhar2010}. Notably, as we know, the DC grid has a low tolerance to a fault; if a single fault can mislead a fast response, so as an attack. Indeed, a deliberate attacker can result in severe consequences on system frequency stability \cite{Tan2017}. Thus one need advanced tools to understand the attack strategies and detect them sufficiently fast in the context of AC/DC interconnected system which is also facilitated by inertia emulators. 

\subsection{Related Work}
Nowadays, the matter of virtual inertia is one of the hottest topics in the power system 
studies \cite{Fang2019,Ruttledge2015}. Different methods for inertia emulation have been proposed for the integration of RES into the power grid by using Energy Storage Systems (ESS) \cite{Datta2013}. In the AC/DC interconnected system, the dynamic effects of these components in the inertia emulation process 
need to be reflected in the high-level frequency control loop, which also highlights the importance of cyber-secure data communications \cite{Dudurych2017}. We can see that many research activities have been done on the cyber security issues of pure AC power systems. Vulnerability and impact analysis of the AC grid to cyber attacks can be found in \cite{TeixeiraSou2015,Esfahani2010}. For the detection of FDI attacks which corrupt the frequency stability, model-based detectors have emerged mainly from observer-based approaches in the framework of differential-algebraic equations (DAEs) \cite{Nyberg2006a}. For instance, in \cite{Ameli2018a} an unknown input observer was employed to detect FDI attacks on the conventional multi-area AC systems. To be noted, these observers for attack detection usually have the same degree as the system dynamics, which can cause troubles in the online implementation especially for large-scale systems \cite{Pan2020}. Other FDI attack detectors may come from statistical methods which always have prior assumptions on the distribution of measurement errors \cite{Sridhar2014}. 

For the hybrid AC/DC grid with virtual inertia, however, there are still very few studies that have focused on its cyber security research \cite{Pan2018}. To do that, it requires extensive knowledge on the dynamics of the complex AC/DC system with emulated inertia and also methods or algorithms for cyber security analysis. The work in \cite{Amir2019} studied the effect of attacks on the HVDC system and impact on the dynamic voltage stability, and proposed a predictive control based method to detect these attacks. The authors in \cite{Brown2018} demonstrated mechanisms by which an attacker could cause system-wide unstable oscillations via loads with emulated inertia control. 
Recently, the literature \cite{Roy2019} studied the possible impact of FDI attacks on the secondary frequency control of low inertia grid under deregulated power systems, and proposed a detection mechanism based on the load forecasts, by ensuring the availability and accuracy of such additional data. To conclude, for vulnerability and impact analysis, more efforts are still needed to learn the optimal FDI attack strategies especially in the context of AC/DC interconnected multi-area system with emulated inertia. Besides, a low-order diagnosis tool for a fast response in the presence of FDI attacks is more preferred in practice. To the best of our knowledge, the following question is still not answered sufficiently, 
\begin{flushleft}
	\centering
	{\emph{Would it be possible to quantify the vulnerability and impact of the 
			FDI attacks on the AC/DC interconnected system with emulated inertia, and propose a diagnosis tool to detect such malicious intrusions in time?}}
\end{flushleft} 

\subsection{Contributions and Outline}
In this article we aim to address the question above. For this purpose, we would first build the frequency dynamics model for the AC/DC interconnected system with virtual inertia emulation capabilities. A high-level control structure is presented, which also illustrates possible vulnerable points of the system to FDI attacks. Next, we introduce impact indices to evaluate the effects of FDI attacks on the frequency stability. We formulate an optimization-based framework to assess the vulnerability of the AC/DC interconnected system to the FDI attacks. In the end, a detector in the form of residual generator with adjustable design parameters is proposed to detect, isolate and even recover each FDI intrusion. The main contributions of this article are: 
\begin{itemize}
	\item[(i)] Unlike many existing literature, we go beyond the study on the normal AC system 
	that we explore the vulnerability and impact of the AC/DC interconnected system with emulated inertia to the FDI attacks. A well-constructed optimization-based framework is built to analyze the optimal attack strategies on achieving undetectability and desired impact. From both the theoretical analysis and numerical results, we have pointed out that the hybrid AC/DC system with virtual inertial can be more vulnerable to FDI attacks, compared with the one without virtual inertia and the conventional AC system.  
	\item[(ii)] We further explore the attack detection problem in the AC/DC interconnected system with virtual inertia. Different from observer-based approaches or other prediction-based techniques, we formulate a type of residual generator that can have adjustable design parameters for fast responses in attack detection. It is guaranteed that the resulting residual signal is decoupled from the system states (e.g., frequency dynamics) and normal load disturbances, and can recover the attack magnitude in the steady-state value of the residual. We also provide attack isolation method together with conditions of attack detectability and isolability. 
\end{itemize}

In Section~\ref{sec:sys_model}, we describe the frequency dynamics model of the AC/DC interconnected system with emulated inertia, and vulnerable points are illustrated in the high-level control structure. 
Section~\ref{sec:fdia} introduces the FDI attack and its optimal strategy to be disruptive and stealthy. Besides, we propose an optimization-based framework to analyze the vulnerability and impact of the system to such attacks. The FDI attack detection approach is developed in Section~\ref{sec:detect} where we also show its capabilities of attack isolation and recovery. Finally, numerical results are 
reported in Section~\ref{sec:result}. 

\begin{figure*}[t!p]
	\centering
	\includegraphics[scale=0.58]{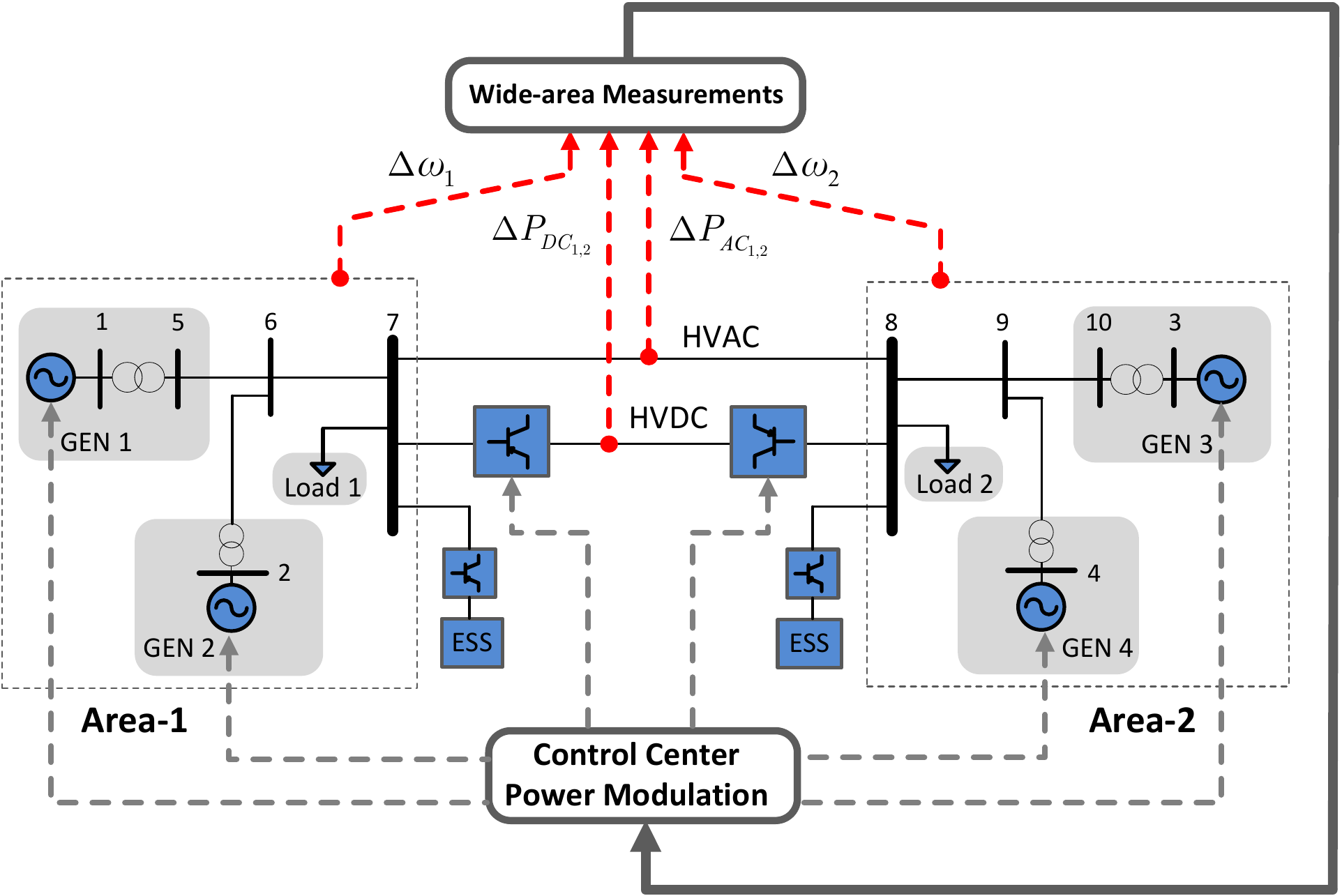}
	\caption{The configuration of the two-area 
		system with parallel AC/DC links and bulk ESS. Since the communication network between the transducers and the control center is very vulnerable, the FDI attacks of this article are mainly launched on the uploading channels (the red lines) of wide-area frequency and AC/DC power flow measurements \cite{Roy2019}.} 
	\label{fig:acdcsys}
\end{figure*}
%

\section{Hybrid AC/DC System Modeling}\label{sec:sys_model}

In this section, the frequency dynamics model in a high-level control structure is presented to simulate and analyze the behaviour of the hybrid AC/DC interconnected system with inertia emulation capabilities.

\subsection{The concept of virtual inertia} \label{subsec:inertia}

For the inertia emulation task, one effective way is to use the derivation of the system frequency proportionally to adjust the active power reference of a converter. Then the emulated inertia can contribute to improve the performance of the system dynamics on the inertia response. 
This control concept is the derivative control which calculates the rate of change of frequency (ROCOF), 
and can be described as %
\begin{align}\label{eq:virt_inert}
\Delta P_{emu} = k_{a} {\omega}_{o} \Delta \dot{\omega} \, ,  
\end{align}
where $\Delta P_{emu}$ is the power reference, $\omega_{o}$ is the nominal frequency and $\Delta \omega$ is the frequency deviation, and $k_{a}$ denotes the inertial proportional conversion gain which can be selected according to an iterating tuning approach for minimizing the frequency deviations; we refer to \cite{Rakhshani2016b} for more details.

\subsection{AC/DC interconnected system with emulated inertia} 
\label{subsec:modeling}

The diagram of an exemplary two-area system with parallel AC/DC links and added bulk ESS for providing virtual inertia is presented in Figure~\ref{fig:acdcsys}. In this test system, two converters are added as interfaces for controlling the behavior of the ESS modules in accordance with HVDC coordination and secondary frequency control signals to reduce the deviations of system frequency during contingencies. 
The wide-area frequency and AC/DC power flow measurements are collected in each area and tie-line, and sent to the control center via communication networks for the HVDC coordination and 
secondary frequency control operation. Note that instead of these external measurements, 
the virtual inertial emulation is using the local information only, which implies a faster response.
The external measurements are transmitted through dedicated communication networks whose protocols (e.g., DNP3, IEC61850) are usually not equipped with adequate cyber security features \cite{Pan2017a}. Considering the vulnerabilities within these communication channels, attack scenarios of this article mainly focus on the uploading paths of wide-area measurements, as depicted in Figure~\ref{fig:acdcsys}.

Next, we describe the frequency dynamics model of the two-area AC/DC interconnected system with 
ESS for inertia emulation. It should be noted that in this article a high-level control structure is adopted to reflect the primary and secondary frequency control properties based on \cite{Rakhshani2017a}. This control structure is illustrated in Figure~\ref{fig:acdcess_contrl}. Thus as we can see from Figure~\ref{fig:acdcess_contrl},  
the frequency deviation of Area $i$ in the Laplace domain can be expressed as 
\begin{equation} \label{eq:freq_areai}
\begin{aligned}
\Delta \omega_{i}(s) \, = \, & \frac{K_{p_{i}}}{1+sT_{p_{i}}} \big[ \Delta P_{m_{i}} -\Delta P_{L_{i}} - (\Delta P_{AC_{i,j}}  \\
& + \Delta P_{DC_{i,j}} + \Delta P_{ESS_{i}}) \big]  \, ,  
\end{aligned}
\end{equation}
where $\Delta P_{ESS_{i}}$ and $\Delta P_{m_{i}}$ are the emulated power from ESS and the total active power from all generation units (GENs) within Area $i$, i.e., $\Delta {P}_{m_{i}} = \sum_{g=1}^{G_{i}} \Delta {P}_{m_{i,g}}$ where $G_{i}$ denotes the number of participated GENs. 
$K_{p_{i}}$ is the system gain and $T_{p_{i}}$ is the system time constant. The total load variation is mentioned by $\Delta P_{L_{i}}$ in Area $i$, while $\Delta P_{AC_{i,j}}$ and $\Delta P_{DC_{i,j}}$ are the AC and DC power flow deviations between Area $i$ and Area $j$, respectively. We can further have
\begin{align}\label{eq:dpmi}
\Delta {P}_{m_{i,g}}(s) = \frac{1}{1+ sT_{ch_{i,g}}} \big[\frac{\Delta \omega_{i}}{R_{i,g} \times 2\pi} - \phi_{i,g}\Delta P_{agc_{i}} \big] \, , 
\end{align}
\begin{align}\label{eq:dptieacij}
\Delta {P}_{AC_{i,j}}(s) = \frac{T_{ij}}{s} \big[\Delta \omega_{i} - \Delta \omega_{j} \big] \, .  
\end{align}
In \eqref{eq:dpmi} and \eqref{eq:dptieacij}, $R_{i,g}$ denotes the droop of each generation unit, $T_{ch_{i,g}}$ is the overall time constant of the turbine-governor model, $\Delta P_{agc_{i}}$ is the output signal from the secondary control of Area $i$ for power reference of each generation unit, $\phi_{i,g}$ is an area participating factor, and $T_{ij}$ denotes the power coefficient of the AC line between Area $i$ and Area $j$.

To model the HVDC link, we use the concept of Supplementary Power Modulation Controller (SPMC). As illustrated in Figure~\ref{fig:acdcess_contrl}, the SPMC can be designed as a high-level damping controller whose inputs consist of deviations of frequencies in the interconnected areas and the deviation of the transmitted power in the AC line. Then the output of SPMC is used for the HVDC link to generate the desired DC power. Thus the coordinated control strategy acting as a supplementary power modulation for this DC link can be described by 
\begin{align}\label{eq:xdc}
\Delta P_{DC_{ref}} = K_{i}\Delta \omega_{i} + K_{j}\Delta \omega_{j} + K_{AC} \Delta {P}_{AC_{i,j}} \, ,  
\end{align}
\begin{align}\label{eq:dpdcij}
\Delta {P}_{DC_{i,j}}(s) = & \frac{1}{1 + s T_{DC}} \Delta P_{DC_{ref}} \, ,  
\end{align}
where $\Delta P_{DC_{ref}}$ denotes the desired DC power reference 
for the HVDC line , $T_{DC}$ is the time constant of the DC link, and $K_{i}$, $K_{j}$ and $K_{AC}$ are used as control gains.

For the inertia emulation process, according to the control law 
in Section~\ref{subsec:inertia}, the deviation of active power output from the ESS in each area, namely $\Delta P_{ESS_{i}}$, can be written as 
\begin{align}\label{eq:dpessi}
\Delta {P}_{ESS_{i}}(s) = \frac{J_{em_{i}}}{1+ sT_{ESS_{i}}} [s \Delta \omega_{i}(s)] \, ,  
\end{align}
where $T_{ESS_{i}}$ is the time constant of the derivative-based components. Notably, in practice, the derivative based control strategy might be sensitive to the noises especially during the measurements of frequency signals. Therefore, a low-pass filter can be added to the model for eliminating the effects of noises. In this study, the dynamics of such a filter with storage elements is considered by the time constant $T_{ESS_{i}}$. From \eqref{eq:dpessi}, there will be two gains ($J_{em_1}$and $J_{em_2}$) representing the virtual inertia for these two areas. 
As stated in Section~\ref{subsec:inertia}, the values of these two gains can be obtained from the iterating optimization approach in \cite{Rakhshani2016b}. For the inertia emulation control 
of this article, it is assumed that we have enough energy stored in the DC side of the converter station, and the stored energy is only used for a short period of time ($2 \, \mathrm{s}$ to $5 \, \mathrm{s}$) to provide virtual inertia. Besides, we have simplified the ESS model in this article that only its dynamic effects on the high-level frequency control has been considered. The detail of state of the charge is not modeled but its value is assumed to be always brought back close to the higher limit during off-peak period.

\begin{figure}[t!p]
	\centering
	\includegraphics[scale=0.67]{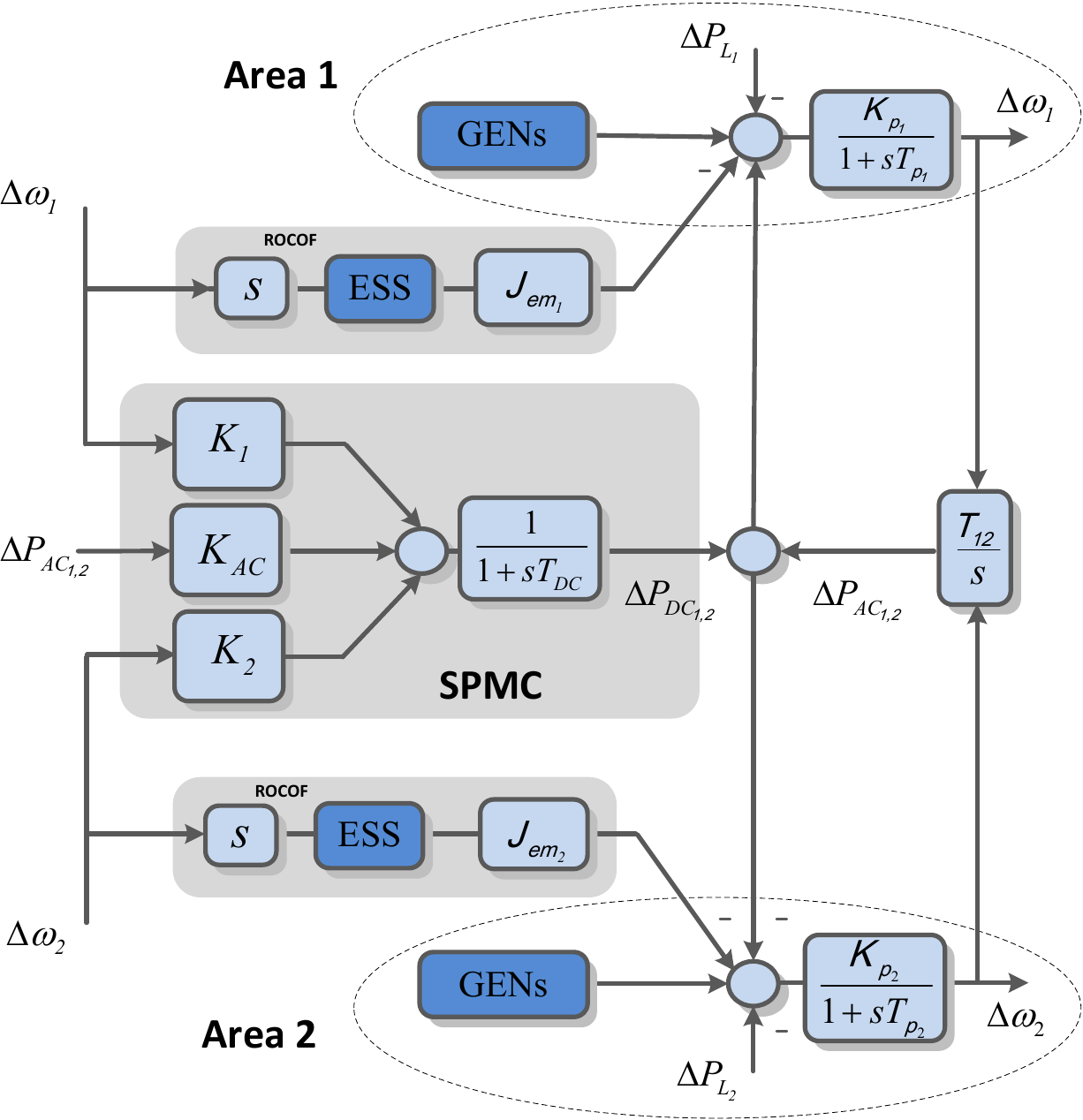}
	\caption{A high-level control structure for the frequency dynamics of the two-area AC/DC interconnected system with ESS for inertia emulation.} \label{fig:acdcess_contrl}
\end{figure}

The part of secondary frequency control calculates the Area Control Error (ACE) of Area $i$. The ACE signal now contains the deviations of frequency in that area and both AC/DC power flows, 
and acts as an input for the integral control action, 
\begin{align}\label{eq:acei}
ACE_{i} = \frac{{\beta}_{i}}{2\pi} \Delta \omega_{i} + (\Delta P_{AC_{i,j}} + \Delta P_{DC_{i,j}} )  \, ,  
\end{align}
\begin{align}\label{eq:agci}
\Delta P_{agc_{i}} = K_{I_{i}}\frac{ACE_{i}}{s} \, ,   
\end{align}
where $\beta_{i}$ is the frequency bias and $K_{I_{i}}$ represents the integral gain of the 
secondary frequency controller. Thus finally, based on the equations \eqref{eq:freq_areai} to \eqref{eq:agci}, the frequency dynamics model of the high-level control for the exemplary two-area AC/DC interconnected system with emulated inertia can be presented as the following state equation, 
\begin{align}\label{eq:spx_twoarea}
\dot{\bm{X}}(t) \, = \, \bm{A}_{c} \bm{X}(t) \, + \, \bm{B}_{c,d} \bm{d}(t) \, . 
\end{align}
where $\bm{X} \in \mathbb{R}^{n_{X}}$ represents the vector of all state variables and $\bm{d} \in \mathbb{R}^{n_{d}}$ is the system input of load variations, namely, 
\begin{equation}\label{eq:spx_xd}
\begin{aligned}
\bm{X} := \Big[ & \Delta \omega_{1} \ \Delta \omega_{2} \ \Delta {P}_{m_{1,1}} \ \Delta {P}_{m_{1,2}} \ \Delta {P}_{m_{2,1}} \ \Delta {P}_{m_{2,2}} \ \Delta P_{agc_{1}} \ \\
& \ \ \Delta P_{agc_{2}} \ \Delta P_{AC_{1,2}} \ \Delta P_{DC_{1,2}} \ \Delta {P}_{ESS_{1}} \ \Delta {P}_{ESS_{2}} \Big]^{\top}, \\
& \quad \quad \quad \quad \quad \ \bm{d} = \Big[ \Delta P_{L_{1}} \ \, \Delta P_{L_{2}} \Big]^\top. \nonumber
\end{aligned}
\end{equation}
Besides, $\bm{A}_{c}$ and $\bm{B}_{c,d}$ are constant matrices. Overall, in the two-area AC/DC interconnected system which also has two inertia emulators, there are three new state variables ($\Delta P_{DC_{1,2}}$, $\Delta {P}_{ESS_{1}}$ and $\Delta {P}_{ESS_{2}}$) of synchronous controllers that would be added, compared with the normal AC system. In addition to \eqref{eq:spx_twoarea}, we can also derive an output model where the wide-area measurements of frequencies and AC/DC power flows are available in the control center of Figure~\ref{fig:acdcsys}. Thus we can have
\begin{align}\label{eq:spy_twoarea}
{\bm{Y}}(t) \, = \, \bm{C} \bm{X}(t) \, , 
\end{align}
where $\bm{Y} \in \mathbb{R}^{n_{Y}}$ represents the system output and $\bm{C}$ is the output matrix. 
For the purpose of numerical analysis, \eqref{eq:spx_twoarea} and \eqref{eq:spy_twoarea} need to be discretized. To obtain the analytical solution for the discretization, the matrices $\bm{A}$ and $\bm{B}_{d}$ of sampled discrete-time model with a sampling-period $T_{s}$ become \cite{Ogata1995},     
\begin{equation}\label{eq:zoh}
\bm{A} = e^{\bm{A}_{c}T_{s}} \, , \quad \bm{B}_{d} = \int_{t = 0}^{T_s} e^{\bm{A}_{c}(T_{s}-t)}\bm{B}_{c,d} \mathrm{d} t \, . \\
\end{equation}
%

\section{FDI Attacks on the Hybrid AC/DC System}\label{sec:fdia}

As discussed in the proceeding, we consider a high-level structure of control and security for frequency dynamics of the multi-area AC/DC system with virtual inertia.

\subsection{FDI attack basics: vulnerability and impact} \label{subsec:fdi_basic}

An FDI attack can modify the wide-area measurements to a lower or higher value. Thus the system output discrete-time model under FDI attacks can be described by 
\begin{align}\label{eq:spy_attacked}
\tilde{\bm{Y}}[k] \, = \, \bm{C}\bm{X}[k] \, + \, \bm{f}[k] \, ,
\end{align}
where $\tilde{\bm{Y}}[\cdot]$ is the corrupted output and $\bm{f}[\cdot]$ denotes FDI attacks. In this article, we mainly consider the general ``stationary'' FDI attack where it occurs as a constant bias injection~$\bm{f}$ at an unknown time instance $k_{\min}$. Other scenarios such as scaling, ramp, pulse and random attacks are referred to \cite{Sridhar2014}. These corruptions on the outputs would affect the dynamics of the controllers and consequently the involved system. As shown in Figure~\ref{fig:acdcsys}, the FDI attacks are mainly on the wide-area frequency and AC/DC power flow measurements, which would corrupt the HVDC coordination and the secondary frequency control. Recall that the inertia emulation is using local measurements only and thus not attacked. 
Hence for instance, an FDI attack on the AC power flow measurement between Areas $i$ and $j$, say $f_{AC_{i,j}}$, can manipulate both the SPMC control and the secondary frequency control loops. Thus this attack can change \eqref{eq:xdc} and \eqref{eq:acei} into the following equations respectively,
\begin{equation}\label{eq:xdc_attack}
\Delta {P}_{DC_{ref}} = K_{i}\Delta \omega_{i} + K_{j}\Delta \omega_{j} + K_{AC} (\Delta {P}_{AC_{i,j}} + f_{AC_{i,j}}) \, , \nonumber 
\end{equation}
\begin{equation}\label{eq:acei_attack}
ACE_{i} = \frac{{\beta}_{i}}{2\pi} \Delta \omega_{i} + (\Delta P_{AC_{i,j}} + f_{AC_{i,j}} + \Delta P_{DC_{i,j}} )  \, . \nonumber  
\end{equation}
Thus the state equation under FDI attacks in the discrete-time mode can be expressed as 
\begin{align}\label{eq:spx_twoarea_attack}
{\bm{X}}[k+1] \, = \, \bm{A} \bm{X}[k] \, + \, \bm{B}_{d} \bm{d}[k] + \, \bm{B}_{f} \bm{f}[k] \, , 
\end{align}
where the matrix $\bm{B}_{f}$ relates FDI attacks to the system states. Note $\bm{B}_{f}$ is obtained through the same matrix transformation as $\bm{B}_{d}$ in \eqref{eq:zoh}, while the matrix $\bm{B}_{c,f}$ in the continuous system model depends on the specific attack scenario. 

\begin{figure*}
	\centering
	\includegraphics[scale=0.77]{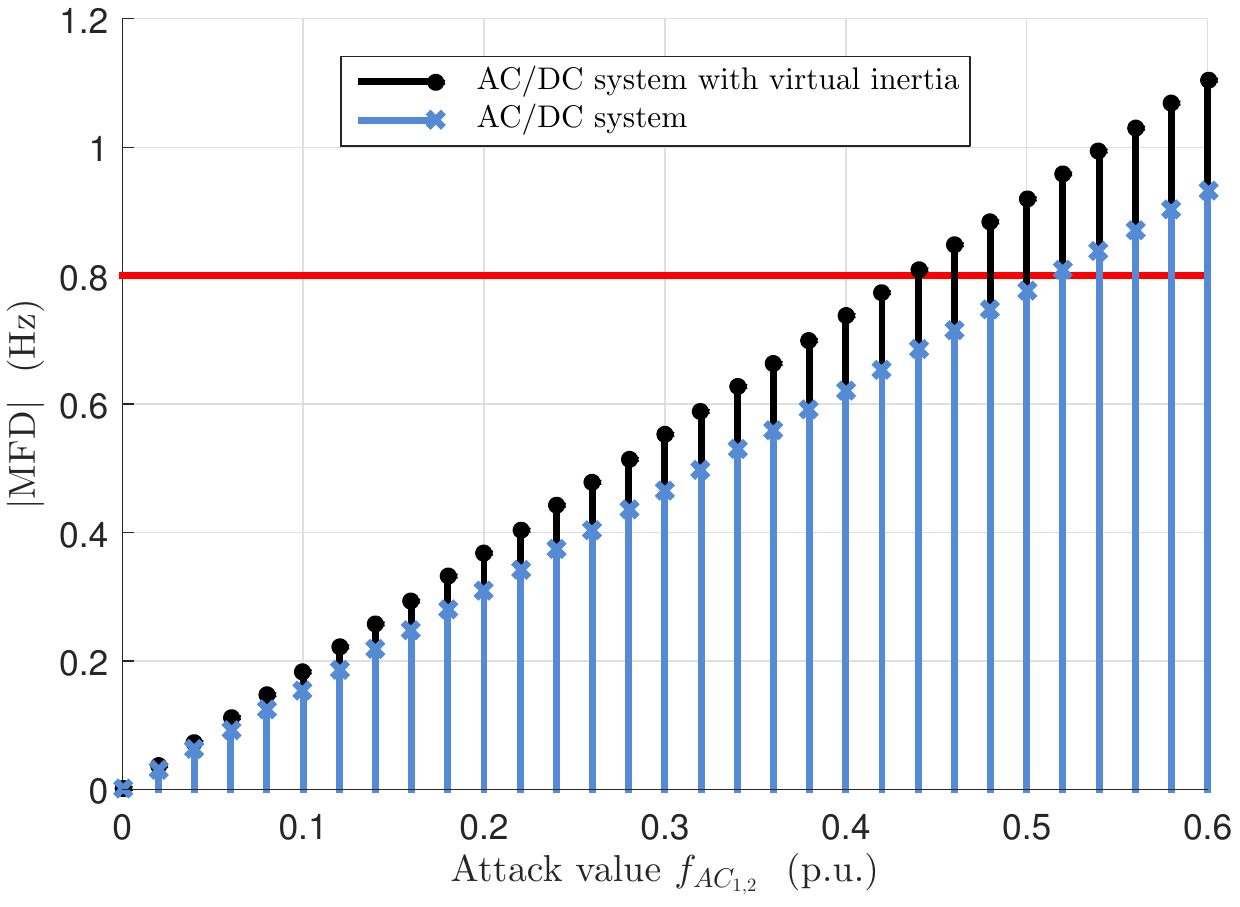}
	\caption{
		The MFD of Area 1 under various values of a univariate attack $f_{AC_{1,2}}$. 
		The red line indicates the MFD limit for having a disruptive FDI attack. }
	\label{fig:mfd_fac12}
\end{figure*}
\begin{Rem}[Vulnerability of different system models under FDI attacks]\label{rem:comp_threemodel}
	Consider the frequency dynamics models of the following systems under FDI attacks,
	\begin{itemize}
		\item normal AC system,
		\item AC/DC interconnected system, 
		\item AC/DC interconnected system with virtual inertia. 
	\end{itemize}
	We can see that, different from the pure AC system, the	hybrid AC/DC interconnected system with emulated inertia would have more vulnerable points to FDI attacks. Intruders can manipulate the wide-area measurements of frequencies and both AC and DC power flows. Besides, according to the high-level control structure, corruptions on these wide-area measurements can affect both the SPMC control and the secondary frequency control loops. In Section~\ref{subsec:result_vul} we would present such a detailed analysis. 
\end{Rem}

An advanced FDI attacker also considers the impact of various attack strategies. The frequency properties that can be influenced by the attacker are mainly maximum (center-of-inertia) frequency deviation (MFD) or steady-state frequency deviation (SSFD). In this article, we use the former MFD during the transients as the attack impact index. Intuitively, for a univariate FDI attack where only one measurement is compromised, a larger constant injection is more desired to cause the maximum damage. In this article, the attack is said to be disruptive to the system when the impact index MFD arrives at a certain value. For instance, in Figure~\ref{fig:mfd_fac12}, this (absolute) value is selected to be $0.8 \, \mathrm{Hz}$, as it may mislead to trigger a possible load shedding scheme when the frequency reaches $59.2 \, \mathrm{Hz}$ (for a $60 \, \mathrm{Hz}$ system) according to\cite{Entsoe2016}. In Figure~\ref{fig:mfd_fac12}, we can see that with the increase of the univariate attack value, the MFD becomes larger. To be disruptive, it can be inferred from Figure~\ref{fig:mfd_fac12} that the univariate attack value on the AC power flow measurement of the two-area AC/DC interconnected system with emulated inertia should be a minimum of $0.44 \, \mathrm{p.u.}$ (for the base of ${100}\ \mathrm{MW}$), while it is $0.52 \, \mathrm{p.u.}$ in the AC/DC system without virtual inertia. These values are obtained by simulations of the frequency dynamics model under the univariate attack $f_{AC_{1,2}}$. 

\subsection{Disruptive stealthy FDI attack strategies}\label{subsec:dis_steal_attack}

The univariate attack for a large impact in Figure~\ref{fig:mfd_fac12} may be detectable since it may go beyond possible thresholds and trigger data quality alarms. An intelligent attack should attend to pursue a desired impact and also satisfy the undetectability criterion \cite{Chen2018}. For that purpose, the attacker is required to have vast attack resources to manipulate multiple data channels (i.e., multivariate attacks) and enough knowledge of the targeted system operations the parameters of the frequency dynamics model in Section~\ref{sec:sys_model} and also the data quality checking programs). Then the attacker can select ``appropriate'' injection values. This ensures that the worst-case attack scenario is considered in the vulnerability analysis. The vulnerability of a multi-area system to such optimal attacks can be quantified by computing the attack resources needed by the attacker to achieve the targets on attack impact and undetectability, which is formalized in the following definition. %
\begin{Def}[Vulnerability to disruptive stealthy attacks] \label{def:vul_disrupt_stlattack}
	Consider an FDI attack with $\bm{f}$. We call it a {\em disruptive stealthy attack} if $\bm{f}$ takes values from the set
	\begin{equation}\label{eq:att_sp}
	\mathcal{F} := \Big\{ \bm{f} \in \mathbb{R}^{n_{Y}}: \ \bm{b_{min}} \leq \bm{F}_{f}\bm{f} \leq \bm{b_{max}} \Big\} \, ,
	\end{equation}
	where the vectors $\bm{b_{min}}, \, \bm{b_{max}} \in \mathbb{R}^{n_{b}}$ and the matrix $\bm{F_{f}} \in \mathbb{R}^{n_{b} \times n_{Y}}$ are scenario specific. The following Remark~\ref{rem:disrupt_stlattack_set} shows that the choice of $\bm{b_{min}}, \, \bm{b_{max}}, \, \bm{F_{f}}$ may be adjusted according to different national grid codes. 
	Thus, to describe the vulnerability of multi-area systems to the disruptive stealthy attacks, one can formulate an optimization program to compute the needed attack resources by the attacker to achieve its targets on attack impact and undetectability, 
	\begin{align}\label{opt:vul_disrupt_stlattack}
	\alpha_{i}^\star := \min\limits_{\bm{f}} \quad&  \| \, \bm{f} \, \|_{0} \nonumber \\
	\mbox{s.t.} \quad& \bm{f} \in \mathcal{F}, \ \ \bm{f}(i) = \mu, \\
	& \bm{f}(j) = 0, \ \ \mbox{for all } j \in \mathcal{P}. \nonumber
	\end{align}
	where $\|\cdot\|_{0}$ denotes the zero vector norm (the number of non-zero elements in a vector), $\bm{f}(i)$ is the injection with value $\mu$ on a specific measurement that the attacker can already access. We also add the constraint that some well-protected data channels (in the set $\mathcal{P}$) cannot be attacked. The multi-area system is more vulnerable to the attack with a smaller $\alpha_{i}^\star$ as it requires fewer data channels to be manipulated to achieve its targets on attack impact and undetectability. 
\end{Def}

The optimization program~\eqref{opt:vul_disrupt_stlattack} is NP-hard. We can use the big M method to express \eqref{opt:vul_disrupt_stlattack} as a mixed integer linear program (MILP) which can be solved by appropriate solvers. We omit the details and refer to \cite{TeixeiraSou2015} for a similar reformulation. 

\begin{Rem}[$\bm{b_{min}}, \, \bm{b_{max}}$ and $\bm{F_{f}}$ selection for disruptive stealthy attacks]\label{rem:disrupt_stlattack_set}
	For an effective attack strategy, the selection of parameters $\bm{b_{min}}, \, \bm{b_{max}}$ and $\bm{F_{f}}$ in \eqref{eq:att_sp} are critical. To be precise, the disruptive stealthy attacks need to satisfy the following criteria \cite{Sridhar2014,Ameli2018a},
	\begin{itemize}
		\item [(i)] To avoid triggering data quality alarms, the frequency deviation after attack corruptions should remain within a range, 
		\begin{align}\label{eq:dw_range}
		\Delta \omega_{min} \leq \Delta \omega_{i,f} \leq \Delta \omega_{max} \, , \nonumber
		\end{align}
		\item [(ii)] The calculated ACE of Area $i$ during attacks, $ACE_{i,f}$, should not exceed a permitted value, 
		\begin{equation}\label{eq:ace_max}
		| ACE_{i,f} | \leq ACE_{max} \, , \nonumber
		\end{equation}
		\item [(iii)] Similarly, the computed power reference signal for the HVDC link under FDI attack, $\Delta P_{DC_{ref,f}}$, should not exceed an acceptable value,
		\begin{align}\label{eq:dpdc_range}
		| \Delta P_{DC_{ref,f}} | \leq \Delta P_{DC_{ref, \, max}} \, , \nonumber
		\end{align}
		\item [(iv)] To be disruptive to the frequency stability, the MFD after FDI corruptions should reach a certain value. 
	\end{itemize}  	
\end{Rem}

It is worth mentioning that the limits of (i) - (iv) are system dependent as reflected in the different national grid codes. In this article, the values of $\Delta \omega_{min}$, $\Delta \omega_{max}$, $ACE_{max}$ and $\Delta P_{DC_{ref, \, max}}$ in (i) - (iii) are set to be $-0.1 \, \mathrm{Hz}$, $0.1 \, \mathrm{Hz}$, $0.05 \, \mathrm{p.u.}$ and $0.1 \, \mathrm{p.u.}$ respectively, according to the references \cite{kundur1994power,Hua2013b,Entsoe2016b,Garcia2017}. For (iv), as mentioned earlier, the (absolute) value of MFD should reach $0.8 \, \mathrm{Hz}$ for a disruptive FDI attack from the grid code in \cite{Entsoe2016}. %

\section{Attack Detection, Isolation and Recovery}\label{sec:detect}

In this section, a detector in the form of residual generator is developed for FDI attack detection, isolation and recovery. To do that, let us first reformulate the system frequency dynamics model in the state-space representation of Section~\ref{sec:sys_model} into a general DAE description. Consider a time-shift operator $q$ that $q\bm{x}[k] \rightarrow \bm{x}[k+1]$. One can fit \eqref{eq:spy_attacked} and \eqref{eq:spx_twoarea_attack} into,
\begin{equation}\label{eq:dae}
\bm{H}(q)\bm{x}[k] + \bm{L}(q)\bm{y}[k] + \bm{F}(q)\bm{f}[k] = 0,
\end{equation}
where $\bm{x}:= [\bm{X}^\top \ d^\top]^{\top}$ contains all the unknown signals of system states and ``disturbances'' (load variations of this article), $\bm{y} := \tilde{\bm{Y}}$ denotes the available system output for a detector. Let $n_{x}$, $n_{y}$ and $n_{r}$ be the dimensions of $\bm{x}[\cdot]$, $\bm{y}[\cdot]$ and the row number of \eqref{eq:dae}. Here $\bm{H}(\cdot)$, $\bm{L}(\cdot)$ and $\bm{F}(\cdot)$ are polynomial matrices in terms of $q$ such that
\begin{equation}\label{eq:dae_def}
\bm{H}(q) := \left[\begin{matrix} -q\bm{I} + \bm{A} & \bm{B}_{d}\\ \bm{C} & \bm{0}\\ \end{matrix}\right], \,  \bm{L}(q) := \left[\begin{matrix} \bm{0} \\ -\bm{I} \\ \end{matrix}\right], \,  \bm{F}(q) := \left[\begin{matrix} \bm{B}_{f}\\ \bm{I} \\ \end{matrix}\right]. \nonumber 
\end{equation}

\subsection{FDI attack detector construction}

The principle of an FDI attack detector is to generate a diagnosis signal to reveal the presence of the attack, giving the available data $\bm{y}[k]$. Definition~\ref{def:attack_detect} characterizes its task. 

\begin{Def}[FDI attack detection] \label{def:attack_detect}
	The diagnosis signal from the detector differentiates whether the system output is a consequence of 
	normal disturbances or FDI attacks. Thus ideally it relates a non-zero mapping from the attack to the diagnosis signal, while it is decoupled from the effects of unknown system states and disturbances. 
\end{Def} 

In this article, we restrict the attack detector to a type of residual generator with linear transfer operations \cite{Pan2020}, i.e., ${r}[k] := \bm{R}(q)\bm{y}[k]$, where ${r}[\cdot]$ is called the residual signal for diagnosis, $\bm{R}(q)$ is the transfer function that needs to be designed. Considering that $\bm{y}[\cdot]$ is associated with $\bm{L}(q)$ in \eqref{eq:dae}, we propose a formulation of $\bm{R}(q) := \bm{a}(q)^{-1}\bm{N}(q)\bm{L}(q)$. Now the task of detector construction comes to the design of $\bm{N}(q)$ whose dimension and predefined order are $n_{r}$ and $d_{N}$, if the denominator $\bm{a}(q)$ with sufficient order to make $\bm{R}(q)$ physically realizable is determined. Multiplying the left of \eqref{eq:dae} by $\bm{a}(q)^{-1}\bm{N}(q)$ would lead to 
\begin{equation}\label{eq:residual_gen}
\begin{aligned}
{r}[k] &= \bm{a}(q)^{-1}\bm{N}(q)\bm{L}(q)\bm{y}[k]  \\
&= -\underbrace{\bm{a}(q)^{-1}\bm{N}(q)\bm{H}(q)\bm{x}[k]}_{\text{(\RNum{1}})} - \underbrace{\bm{a}(q)^{-1}\bm{N}(q)\bm{F}(q)\bm{f}[k]}_{\text{(\RNum{2}})} \, .
\end{aligned}
\end{equation}
Term (I) is the part from the effect of unknown system states and disturbances $\bm{x}[\cdot]$. Term (II) corresponds to the FDI attack. Thus according to Definition~\ref{def:attack_detect}, the desired detector would generate the residual signal $r[\cdot]$ that can be decoupled from $\bm{x}[\cdot]$ but keep sensitive to $\bm{f}[\cdot]$. We would expect
\begin{align}\label{eq:desire_detect}
\bm{N}(q)\bm{H}(q) = 0 \, , \quad \bm{N}(q)\bm{F}(q) \neq 0 \, .
\end{align}
Inspired by \eqref{eq:desire_detect}, the following theoretical result shows an effective way for attack detection and also recovery. 

\begin{Thm}[FDI attack detection and recovery]\label{the:ss_uniatt_track}
	It can be observed that $\bm{H}(q) := \sum_{i = 0}^{1}\bm{H}_{i}q^{i}$, $\bm{N}(q) := \sum_{i = 0}^{d_{N}}\bm{N}_{i}q^{i}$ and $\bm{F}(q) := \bm{F}$. Consider an FDI attack in the set~\eqref{eq:att_sp}. A residual generator with the following linear program characterizations for \eqref{eq:desire_detect} can have non-zero steady-state residual output that recovers the attack value $\bm{f}$, 
	\begin{align}\label{eq:the_att_detect_recov}
	\left\{
	\begin{array}{ll} 
	& \bar{\bm{N}}\bar{\bm{H}} = 0 \, , \\ 
	& -\bm{a}(1)^{-1}\displaystyle\sum\limits_{i=0}^{d_{N}} \bm{N}_{i}\bm{F} = 1 \, ,
	\end{array}
	\right. 
	\end{align}
	where the matrices $\bar{\bm{N}}$, $\bar{\bm{H}}$ are defined as
	\begin{equation}\label{eq:NH_bar}
	\begin{aligned}
	\bar{\bm{N}} &:= \left[\begin{matrix} \bm{N}_{0} & \bm{N}_{1} &  \cdots & \bm{N}_{d_{N}} \end{matrix}\right], \nonumber\\
	\bar{\bm{H}} &:= \left[\begin{matrix} \bm{H}_{0} & \bm{H}_{1} & 0 & \cdots & 0 \\ 0 & \bm{H}_{0} & \bm{H}_{1} & 0 & \vdots \\ \vdots & 0  & \ddots & \ddots & 0 \\ 0 & \cdots & 0 & \bm{H}_{0} & \bm{H}_{1} \end{matrix}\right]. \nonumber
	\end{aligned}
	\end{equation}
\end{Thm}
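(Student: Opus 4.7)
The plan is to split the proof of Theorem~\ref{the:ss_uniatt_track} into two parts mirroring the two linear conditions in \eqref{eq:the_att_detect_recov}: a decoupling condition that eliminates term~(I) of \eqref{eq:residual_gen}, and a unit-DC-gain condition that forces the steady-state residual to recover the attack value through term~(II). Throughout I would implicitly assume that $\bm{a}(q)$ is chosen Schur-stable, so that $\bm{a}(q)^{-1}$ is a bounded causal operator and a discrete-time final-value argument applies.

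For the decoupling condition, I would expand the polynomial product $\bm{N}(q)\bm{H}(q)$ using $\bm{H}(q) = \bm{H}_{0} + \bm{H}_{1}q$ and $\bm{N}(q) = \sum_{i=0}^{d_{N}}\bm{N}_{i}q^{i}$ to obtain
\[
\bm{N}(q)\bm{H}(q) = \bm{N}_{0}\bm{H}_{0} + \sum_{i=1}^{d_{N}}\big(\bm{N}_{i-1}\bm{H}_{1} + \bm{N}_{i}\bm{H}_{0}\big)q^{i} + \bm{N}_{d_{N}}\bm{H}_{1}\,q^{d_{N}+1}.
\]
Forcing this polynomial in $q$ to vanish identically yields the $d_{N}+2$ block-matrix equations $\bm{N}_{0}\bm{H}_{0}=0$, $\bm{N}_{i-1}\bm{H}_{1}+\bm{N}_{i}\bm{H}_{0}=0$ for $1\le i\le d_{N}$, and $\bm{N}_{d_{N}}\bm{H}_{1}=0$. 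Stacking these column by column is exactly the relation $\bar{\bm{N}}\bar{\bm{H}}=0$, so the first equation of \eqref{eq:the_att_detect_recov} is equivalent to $\bm{N}(q)\bm{H}(q)=0$ and hence to term~(I) of \eqref{eq:residual_gen} vanishing for every admissible $\bm{x}[\cdot]$.

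For the recovery condition, I exploit the stationary nature of the attack: since $\bm{f}[k]=\bm{f}$ for all $k\ge k_{\min}$, the forward shift acts trivially on $\bm{f}[\cdot]$ in steady state, so $q^{i}\bm{f}[k]=\bm{f}$ once $k\ge k_{\min}$. Consequently $\bm{N}(q)\bm{F}\bm{f}[k]\to\big(\sum_{i=0}^{d_{N}}\bm{N}_{i}\big)\bm{F}\bm{f}$. Feeding this constant signal through the stable filter $\bm{a}(q)^{-1}$ and applying the DC-gain relation (the discrete-time final-value theorem produces gain $1/\bm{a}(1)$ on a step input) gives, together with the vanishing of term~(I),
\[
r[\infty] = -\bm{a}(1)^{-1}\left(\sum_{i=0}^{d_{N}}\bm{N}_{i}\right)\bm{F}\,\bm{f},
\]
which coincides with $\bm{f}$ exactly under the second equation of \eqref{eq:the_att_detect_recov}.

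The main obstacle I anticipate is rigorously justifying the steady-state computation: one must invoke Schur stability of $\bm{a}(q)$ together with a clean discrete-time final-value argument on a step-like input, including controlling the transient decay caused by the jump of $\bm{f}[\cdot]$ at $k_{\min}$ and by the filter's initial conditions. A secondary concern is dimensional consistency — the identity ``$\cdots=1$'' is most naturally read per attack channel (i.e.\ for a scalar residual targeting a single component of $\bm{f}$), so if one wishes to recover the full multivariate $\bm{f}\in\R^{n_{Y}}$ simultaneously, the condition should be promoted to the matrix identity $-\bm{a}(1)^{-1}\sum_{i}\bm{N}_{i}\bm{F}=\bm{I}$, a subtlety I would flag explicitly in the proof.
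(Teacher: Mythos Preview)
Your proposal is correct and follows essentially the same route as the paper: the paper likewise shows $\bm{N}(q)\bm{H}(q)=\bar{\bm{N}}\bar{\bm{H}}[\bm{I},\,q\bm{I},\,\ldots,\,q^{d_{N}+1}\bm{I}]^{\top}$ to obtain the decoupling equivalence, and then evaluates $-\bm{a}(q)^{-1}\bm{N}(q)\bm{F}(q)\bm{f}$ at $q=1$ for the steady-state recovery. Your additional remarks on Schur stability of $\bm{a}(q)$ and on the scalar-versus-matrix reading of the ``$=1$'' condition are valid refinements that the paper leaves implicit.
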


\begin{proof}
	Note that $\bm{N}(q)\bm{H}(q) = \bar{\bm{N}}\bar{\bm{H}} [\bm{I} ,\ q\bm{I} ,\ \cdots ,\ q^{d_{N} + 1}\bm{I} ]^\top$. If $\bar{\bm{N}}\bar{\bm{H}}=0$ as stated in \eqref{eq:the_att_detect_recov}, then the diagnosis filter becomes $r[k] = -\bm{a}(q)^{-1}\bm{N}(q)\bm{f}[k]$. The steady-state value of the residual signal under the FDI attack would become $-\bm{a}(q)^{-1}\bm{N}(q)\bm{F}(q)\bm{f} \ | \ _{q=1}$. To track the FDI attack magnitude, one can simply make $-\bm{a}(1)^{-1}\bm{N}(1)\bm{F}(1) = 1$. Due to that $\bm{N}(1)\bm{F}(1) = \sum_{i=0}^{d_{N}} \bm{N}_{i}\bm{F}$, the residual signal from \eqref{eq:the_att_detect_recov} recovers the exact attack value $\bm{f}$ in the steady-state behavior of the residual generator. %
\end{proof}

Next, the residual generator design becomes to find a feasible $\bar{\bm{N}}$ satisfying~\eqref{eq:the_att_detect_recov}. To increase the sensitivity of the residual to the attack, in addition to \eqref{eq:the_att_detect_recov}, the detector may also aim to let the coefficients of $\bm{N}(q)\bm{F}(q)$ in \eqref{eq:desire_detect} achieve the maximum value. Then we can characterize the attack detection and recovery problem as the optimization program, 
\begin{align}\label{opt:detect}
\gamma^\star = \max\limits_{\bar{\bm{N}}} \quad&  \big\| \bar{\bm{N}}\bar{\bm{F}} \big \|_{\infty} \nonumber \\
\mbox{s.t.} \quad& \bar{\bm{N}}\bar{\bm{H}} = 0 \, , \quad \big\| \bar{\bm{N}} \big \|_{\infty} \leq \eta \, , \\
&  -\bm{a}(1)^{-1}\displaystyle\sum\limits_{i=0}^{d_{N}} \bm{N}_{i}\bm{F} = 1 \, , \nonumber
\end{align}
where the constraint $\| \bar{\bm{N}} \|_{\infty} \leq \eta$ is added to avoid possible unbounded solutions and it does not affect the performance of the detector. The matrix $\bar{\bm{F}}$ has a similar definition with $\bar{\bm{H}}$ in Theorem~\ref{the:ss_uniatt_track}. Thus if we have a resulted $\gamma^\star > 0$, then the solution to \eqref{opt:detect} offers a residual generator that detects the FDI attack and also tracks the attack value. 

Strictly speaking, the proposed optimization program~\eqref{opt:detect} is not a linear program (LP) due to the non-convex objective function. However, as explained by a similar argument in \cite[Lemma 4.3]{Esfahani2016}, one can view \eqref{opt:detect} as a family of $n$ standard LPs where $n$ is the number of columns of $\bar{\bm{F}}$.

\begin{Rem}[Attack isolation]\label{rem:attack_isolate}
	The residual generator from \eqref{opt:detect} is mainly designed for one univariate attack. For multivariate attacks ($\alpha_{i}^\star > 1$ from \eqref{opt:vul_disrupt_stlattack}), an alternative is to build a bank of residual generators where each of them aims to detect one particular intrusion and keep insensitive to others, by considering the following ``reconstructed'' DAE,
	\begin{equation}
	\Big[ \bm{H}(q) \ \bm{F}_{ - j}(q) \Big]  \left[\begin{matrix} {\bm{x}}[k] \\ \bm{f}_{-j}[k]\end{matrix}\right] + \bm{L}(q){\bm{y}}[k] + \bm{F}_{j}(q)\bm{f}_{j}[k] = 0, \nonumber
	\end{equation}
	where $\bm{F}_{ - j}(q)$ is the polynomial matrix that includes all columns of $\bm{F}(q)$ except the $j$-th one, and similarly $\bm{f}_{-j}[k]$ contains all the elements of $\bm{f}[k]$ except the $j$-th one. Then the $j$-th residual generator can be designed using the same approach in Theorem~\ref{the:ss_uniatt_track} and~\eqref{opt:detect} for the $j$-th intrusion while isolating the effects from others. The $j$-th attack can be identified by the $j$-th residual generator since the other residual generators keep insensitive to this attack. Besides, with~\eqref{opt:detect}, it can recover the $j$-th attack's value in the steady-state behavior of the $j$-th residual generator. 
\end{Rem}

\begin{figure}[t]
	\centering
	\includegraphics[scale=0.68]{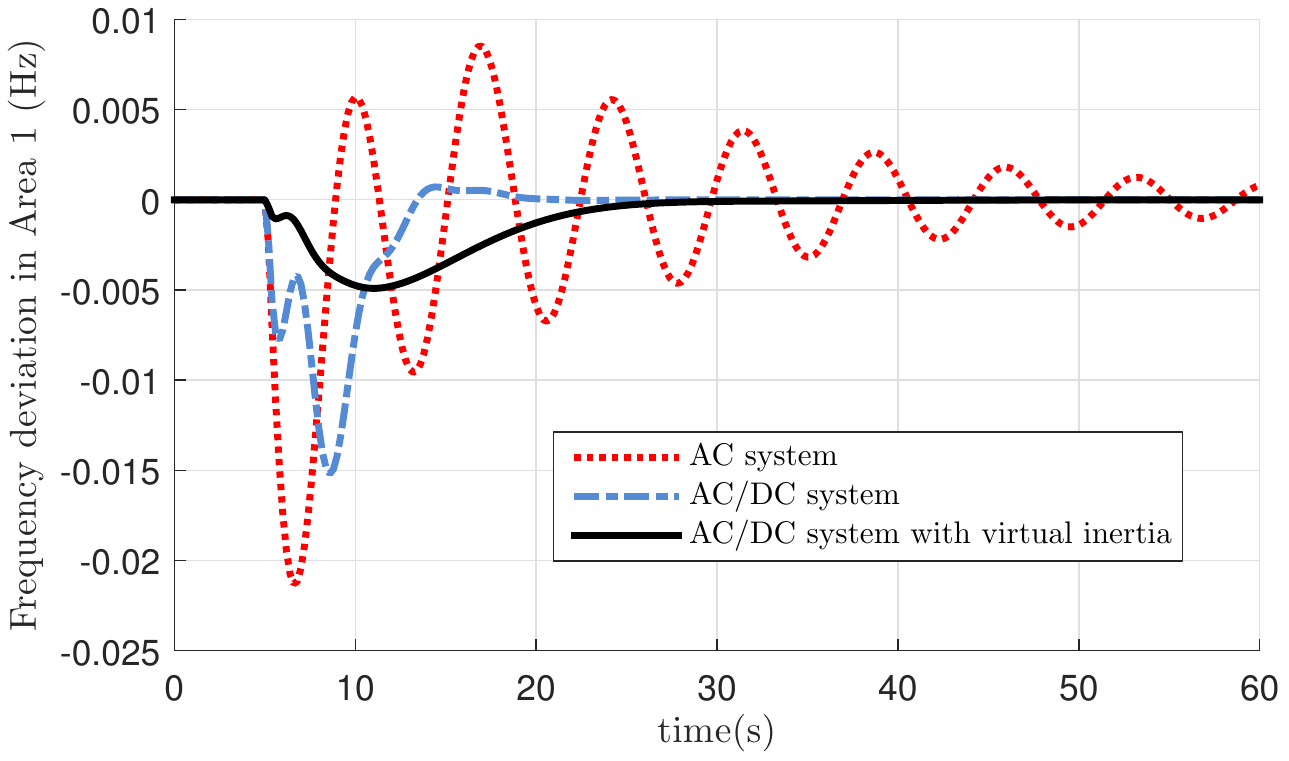}
	\caption{Frequency responses of Area 1 under a step load change of $ 3 \, \mathrm{\%}$ increase at $t = 5 \, \mathrm{s}$.} 
	\label{fig:freq1_d}
\end{figure}

Remark~\ref{rem:attack_isolate} shows that the attack isolation problem can be treated as an attack detection task effectively. In the end, we provide sufficient and necessary conditions for the feasibility of attack detection and isolation. %
\begin{Lem}[Necessary and sufficient conditions of attack detectability and isolability] \label{lem:detect_condition}
	For a univariate FDI attack ($\alpha_{i}^\star = 1$), it is detectable that satisfies \eqref{eq:desire_detect} if, and only if,
	\begin{align}\label{eq:detect_cond}
	Rank \ \big(\big[ \bm{H}(q) \ \, \bm{F}(q) \big] \big) > Rank \ \big(\bm{H}(q)\big). 
	\end{align}
	Besides, for multivariate FDI attacks ($\alpha_{i}^\star > 1$), one particular intrusion $\bm{f}_{j}$ is isolable from others if, and only if,
	\begin{align}\label{eq:isolate_cond}
	Rank \ \big( \big[ \bm{H}(q) \ \, \bm{F}(q)  \big] \big) > Rank \ \big( \big[ \bm{H}(q) \ \, \bm{F}_{-j}(q) \big] \big). 
	\end{align}
\end{Lem}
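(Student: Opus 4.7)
The plan is to reduce the feasibility of the decoupling conditions in \eqref{eq:desire_detect} to a statement about the left null spaces of polynomial matrices, and then convert that into a rank comparison via rank-nullity over the field of rational functions $\mathbb{R}(q)$. Concretely, I would first observe that designing a residual generator amounts to finding a nonzero polynomial row vector $\bm{N}(q)$ that lies in the left null space of $\bm{H}(q)$ but not in the left null space of $[\bm{H}(q) \ \bm{F}(q)]$. Viewing every entry over $\mathbb{R}(q)$ and applying rank-nullity, these two left null spaces have dimensions $n_{r} - \mathrm{rank}(\bm{H}(q))$ and $n_{r} - \mathrm{rank}([\bm{H}(q) \ \bm{F}(q)])$. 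Since augmenting columns cannot decrease rank, the second null space is always contained in the first; the containment is strict, and hence a qualifying $\bm{N}(q)$ exists, precisely when the two ranks differ. Combined with Forney's minimal polynomial basis theorem, which guarantees that any element of the left null module over $\mathbb{R}(q)$ can be rescaled by a common denominator to a polynomial row, this establishes both directions of \eqref{eq:detect_cond}.

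For the isolability statement I would directly invoke the reconstructed DAE of Remark~\ref{rem:attack_isolate}: isolating $\bm{f}_{j}$ is equivalent to \emph{detecting} the univariate attack $\bm{f}_{j}$ in a system whose unknown-signal matrix is $[\bm{H}(q) \ \bm{F}_{-j}(q)]$ and whose attack matrix is $\bm{F}_{j}(q)$. Applying the detectability criterion from the first part with $\bm{H}(q)$ replaced by $[\bm{H}(q) \ \bm{F}_{-j}(q)]$ and $\bm{F}(q)$ replaced by $\bm{F}_{j}(q)$ yields the necessary and sufficient condition $\mathrm{rank}([\bm{H}(q) \ \bm{F}_{-j}(q) \ \bm{F}_{j}(q)]) > \mathrm{rank}([\bm{H}(q) \ \bm{F}_{-j}(q)])$. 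Up to a column permutation that reassembles $\bm{F}_{-j}(q)$ and $\bm{F}_{j}(q)$ into $\bm{F}(q)$, this is exactly \eqref{eq:isolate_cond}.

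The main obstacle, compared with the purely constant-matrix case, is the polynomial-matrix setting. Rank and null-space dimension have to be interpreted as \emph{normal rank}, i.e., rank over $\mathbb{R}(q)$, and one has to argue that the resulting $\bm{N}(q)$ can actually be chosen polynomial of bounded predefined order $d_{N}$ so that the detector is physically realizable and fits the parametrization used in Theorem~\ref{the:ss_uniatt_track} and \eqref{opt:detect}. I would discharge this by appealing to minimal polynomial basis theory: the left null module of $\bm{H}(q)$ admits a basis of polynomial rows, and an upper bound on the required $d_{N}$ is given by the largest Forney index of that basis, which depends only on $\bm{H}(q)$ (and, for isolability, on $[\bm{H}(q) \ \bm{F}_{-j}(q)]$) and is therefore intrinsic to the system data rather than a restriction on the conditions \eqref{eq:detect_cond}--\eqref{eq:isolate_cond}.
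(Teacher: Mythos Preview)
Your proposal is correct and follows essentially the same approach as the paper: reduce detectability to a rank comparison via the left null space of $\bm{H}(q)$, then obtain isolability by invoking the reconstructed DAE of Remark~\ref{rem:attack_isolate} and applying the detectability criterion with $\bm{H}(q)$ replaced by $[\bm{H}(q)\ \bm{F}_{-j}(q)]$. The paper's own proof is terser---it outsources the first step to \cite[Theorem~3]{Nyberg2006a} and merely restates the rank condition as $\bm{F}(q)\notin\mbox{Im}\big(\bm{H}(q)\big)$---whereas you supply the self-contained rank--nullity argument over $\mathbb{R}(q)$ and the Forney minimal-basis justification for polynomial realizability that the paper leaves implicit.
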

\begin{proof}
	The detectability condition \eqref{eq:detect_cond} is adapted from \cite[Theorem~3]{Nyberg2006a}. Alternatively, \eqref{eq:detect_cond} can be rewritten as $\bm{F}(q) \notin {\mbox{Im}} \big(\bm{H}(q)\big)$. It ensures that the residual signal keeps sensitive to the FDI attack but decoupled from the unknown system states and disturbances. For the isolability criterion, note that from Remark~\ref{rem:attack_isolate}, the $\bm{H}(q)$ in \eqref{eq:dae} has been ``replaced'' by $\big[ \bm{H}(q) \ \, \bm{F}_{ - i}(q) \big]$ in the reconstructed DAE. Then it is easy to obtain \eqref{eq:isolate_cond} extended from \eqref{eq:detect_cond}. Besides, the condition \eqref{eq:isolate_cond} can be also rewritten as $\bm{F}_{j}(q) \notin {\mbox{Im}} \big(\big[ \bm{H}(q) \ \, \bm{F}_{-j}(q) \big] \big)$. 
\end{proof}
\begin{figure*}[t!p]
	\centering
	\begin{subfigure}[t]{0.49\textwidth}
		\centering
		\includegraphics[scale=0.60]{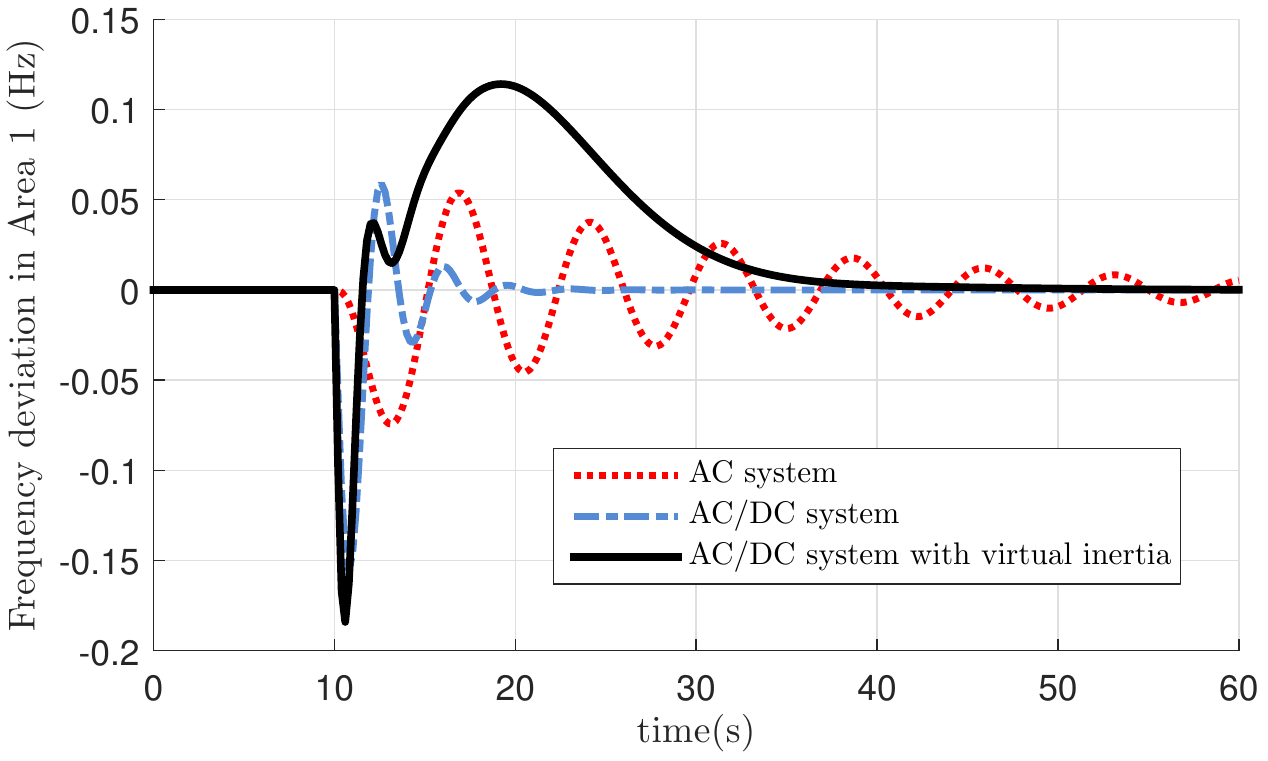}
		\caption{}\label{subfig:freq1_apac}
	\end{subfigure}
	~
	\begin{subfigure}[t]{0.49\textwidth}
		\centering
		\includegraphics[scale=0.60]{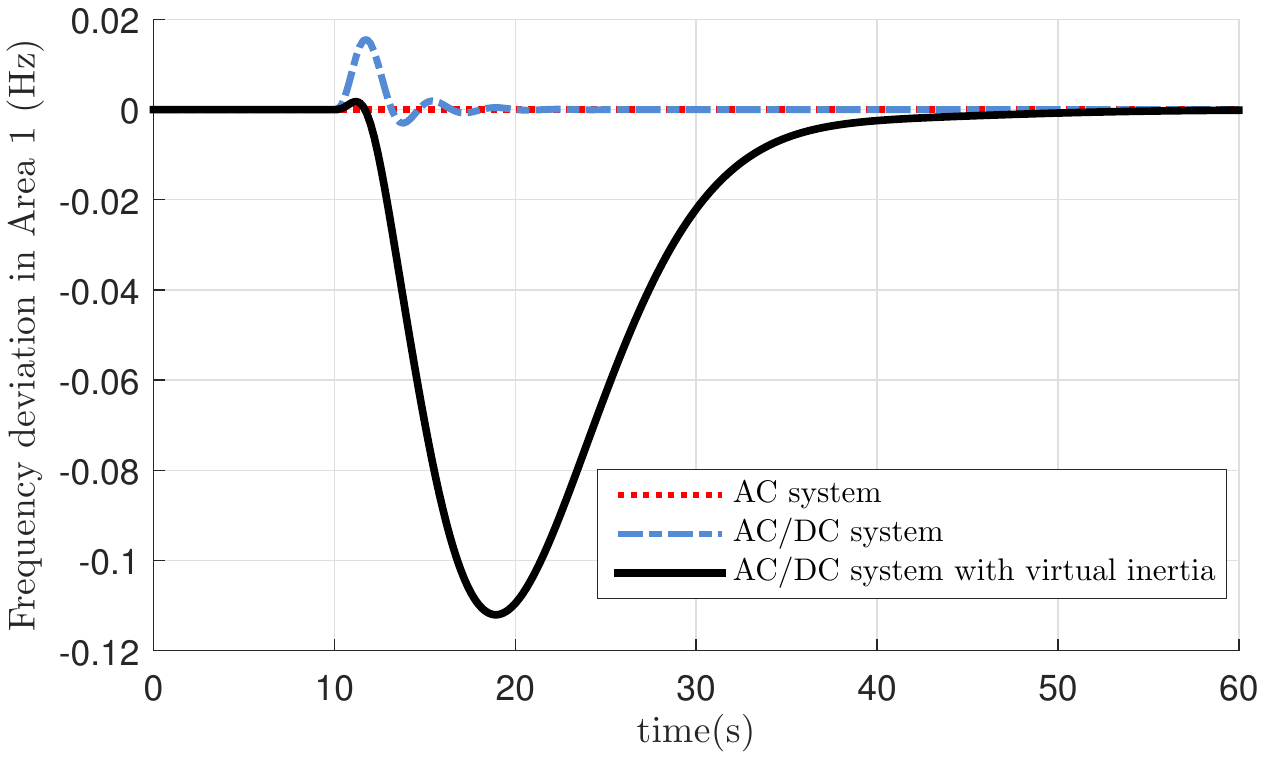}
		\caption{}\label{subfig:freq1_apdc}
	\end{subfigure}
	\\ 
	\begin{subfigure}[t]{0.49\textwidth}
		\centering
		\includegraphics[scale=0.60]{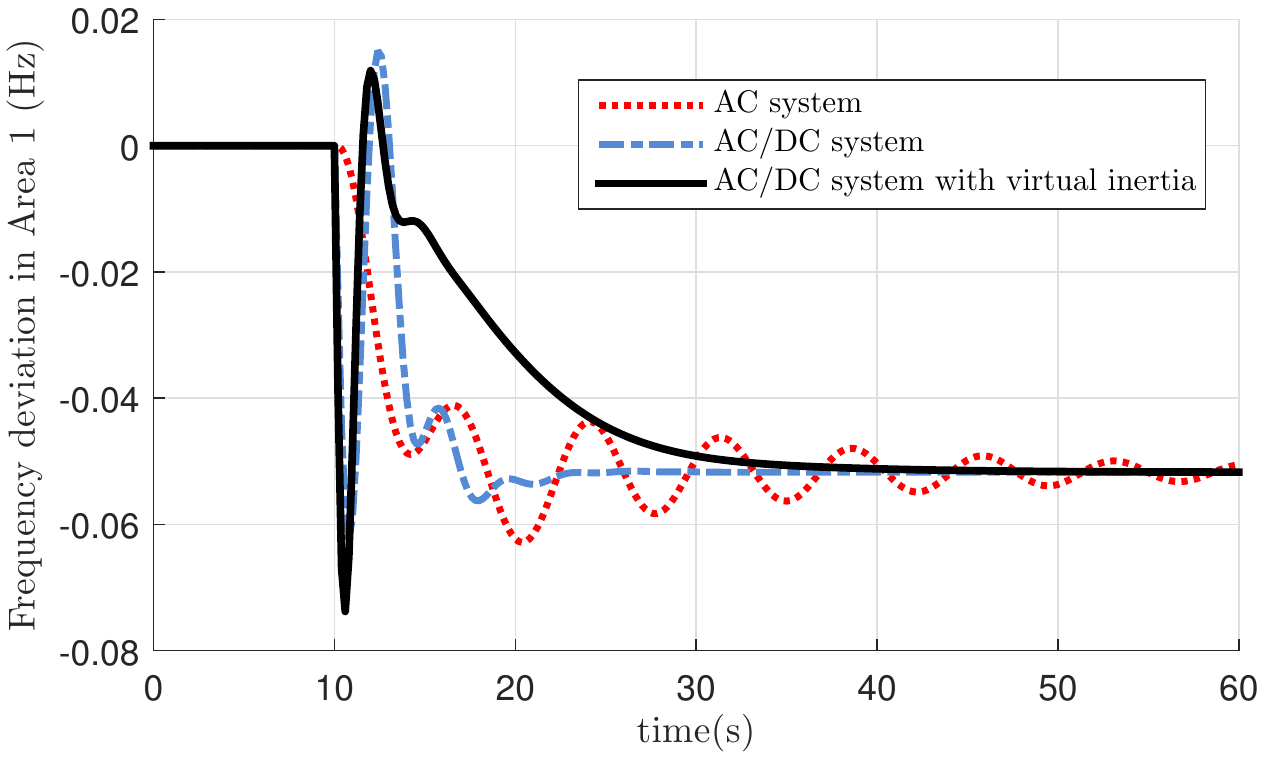}
		\caption{}\label{subfig:freq1_aw1}
	\end{subfigure}
	~	
	\begin{subfigure}[t]{0.49\textwidth}
		\centering
		\includegraphics[scale=0.60]{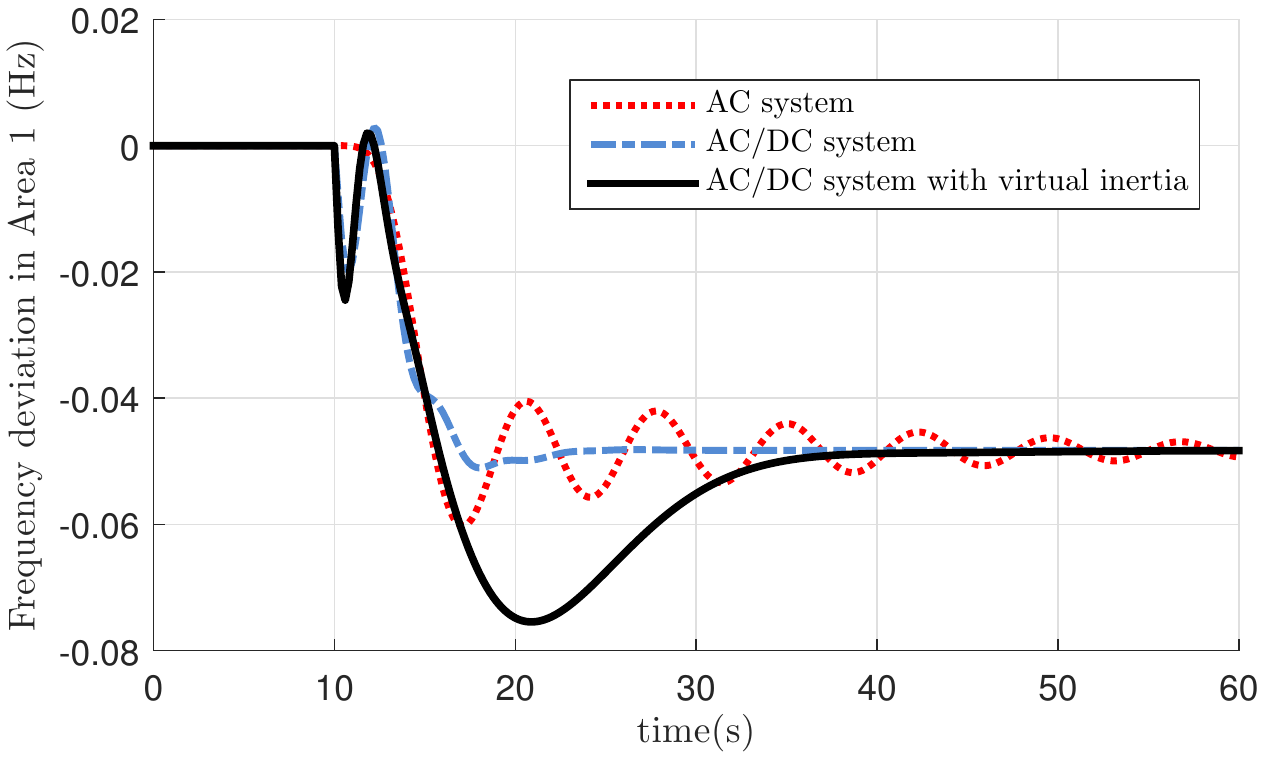}
		\caption{}\label{subfig:freq_aw2}
	\end{subfigure}
	\\ 
	\caption{Frequency responses of Area 1 under univariate attacks (a) on the AC link, $f_{AC_{1,2}} =0.1 \, \mathrm{p.u.}$; (b) on the DC link, $f_{DC_{1,2}} =0.1 \, \mathrm{p.u.}$; (c) on the frequency of Area 1, $f_{\omega_{1}} = 0.1 \, \mathrm{Hz}$; (d) on the frequency of Area 2, $f_{\omega_{2}} = 0.1 \, \mathrm{Hz}$ at $t = 10 \, \mathrm{s}$.} 
	\label{fig:freq1_a_allmeas}
\end{figure*}
\begin{figure}
	\centering
	\includegraphics[scale=0.60]{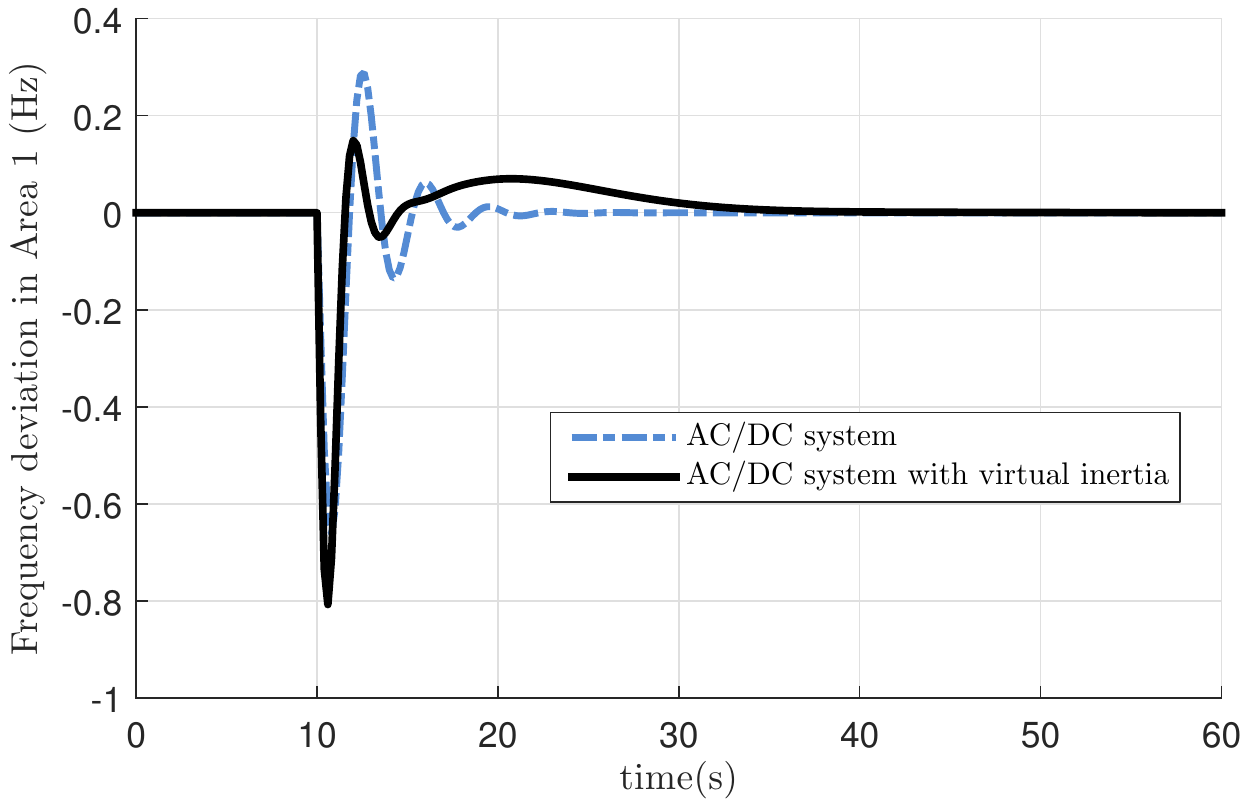}
	\caption{Frequency responses of Area 1 under the multivariate attacks 
		on both AC and DC links.}
	\label{fig:freq1_a_disruptstealth}
\end{figure}

\section{Numerical Results} \label{sec:result}

\subsection{Preliminaries} 
In order to evaluate the vulnerability and impact of the AC/DC interconnected system with virtual inertia to FDI attacks and also validate the effectiveness of the proposed attack detection methodology, in this section, we study the exemplary two-area system in Figure~\ref{fig:acdcsys} with parallel AC/DC links and ESS for inertia emulation and provide numerical results. As shown in Figure~\ref{fig:acdcsys}, there are two GENs and one load demand center in each area. The system parameters of the two-area system and the control parameters of this case study are referred 
\cite{Rakhshani2017a}. Given the characterizations of the time responses of controllers (especially inertia emulators) in the high-level structure, the sampling period $T_{s}$ is given as $0.04 \, \mathrm{s}$ \cite{Rakhshani2016b}. Till this end, we can obtain a 12-order discrete-time model which can be fitted into the DAE form of \eqref{eq:dae}.    

In the detector design, the degree of the residual generator is set to $d_{N} = 3$ which is much less than the order of the system. Besides, we give the denominator a determined formulation of $\bm{a}(q) := (q - p)^{d_{N}}/{{(1-p)}^{d_{N}}}$ where $p$ can be treated as the pole of $R(q)$, and it is normalized in steady-state value for all feasible poles. It should be mentioned that the parameters $d_{N}$ and $p$ are adjustable for a fast response of attack detection in the context of frequency dynamics considering inertia emulation. Particularly, as a general rule, the smaller the poles, the faster the residual responds, and the more sensitive to system noises \cite{Pan2020}. We use CPLEX to solve all the corresponding optimization programs.

\subsection{Vulnerability of two-area systems to FDI attacks} \label{subsec:result_vul}

First, we present the results of frequency deviations in Figure~\ref{fig:freq1_d} where the system input is only a step load change of $ 3 \, \mathrm{\%}$ increase at $t = 5 \, \mathrm{s}$. The comparisons are made on the normal AC system, the AC/DC interconnected system and the AC/DC interconnected system with virtual inertia. We can see that the HVDC link and especially, the inertia emulation provided by the ESS, can improve the system dynamics significantly in damping frequency oscillations during the step-load fault, which proves the effectiveness of the high-level frequency control structure of this article.

Next, to evaluate the vulnerability of different system models to FDI attacks, we launch univariate attacks on the wide-area measurements of frequencies and AC/DC power flows separately (recall Figure~\ref{fig:acdcsys} for vulnerable measurement uploading channels). The simulation results are shown in Figure~\ref{fig:freq1_a_allmeas}. In all of these scenarios, the univariate attack with the same attack value can cause the most severe ``damage'' to the AC/DC interconnected system with virtual inertia. Comparing with the normal AC system and the system with parallel AC/DC links but without emulated inertia, the AC/DC system with inertia emulators is always with the largest MFDs under each univariate attack scenario. This observation is consistent with Remark~\ref{rem:comp_threemodel} of Section~\ref{subsec:fdi_basic}. In fact, as stated in Remark~\ref{rem:comp_threemodel}, FDI corruptions on the wide-area frequencies and AC/DC power flows would affect both the SPMC control and the secondary frequency control loops as these measurements are inputs to these controllers in our high-level control structure. 

We continue with disruptive stealthy attacks introduced in Section~\ref{subsec:dis_steal_attack}. These FDI attacks with optimal attack strategies can be multivariate to achieve the targets on attack impact and undetectability, and are obtained by solving the optimization program \eqref{opt:vul_disrupt_stlattack} for vulnerability analysis. From the results of~\eqref{opt:vul_disrupt_stlattack}, an ``optimal'' multivariate attack ($\alpha_{i}^{\star} = 2$) that can manipulate both AC and DC power lines with $f_{AC_{1,2}} = 0.44 \, \mathrm{p.u.}$ and $f_{DC_{1,2}} = -0.39 \, \mathrm{p.u.}$ is a disruptive stealthy attack in the set of \eqref{eq:att_sp} for the two-area AC/DC interconnected system with virtual inertia. Figure~\ref{fig:freq1_a_disruptstealth} shows the frequency response of Area 1 under this multivariate attack. The MFD reaches $-0.8 \, \mathrm{Hz}$ at around $t = 10.6 \, \mathrm{s}$ while the attack is launched at $t = 10 \, \mathrm{s}$, which implies a disruptive attack as defined in this article. The MFD of the AC/DC system without virtual inertia has reached $-0.66 \, \mathrm{Hz}$ under the same multivariate attack. To be noted, there is no disruptive stealthy attack in the type of Definition~\ref{def:vul_disrupt_stlattack} when solving \eqref{opt:vul_disrupt_stlattack} for the normal AC system. Thus it is reasonable to conclude 
that the AC/DC system with virtual inertia is more vulnerable to FDI attacks.

\subsection{FDI attack detection and recovery} \label{subsec:result_detect}

In the third simulation, we validate the proposed methodology of attack detection, isolation and recovery. To challenge the detector, the system input $\bm{d}$ is modeled as stochastic load patterns; see Figure~\ref{subfig:stoload} of $\Delta P_{L_{1}}$. The adversarial cases come from the disruptive stealthy attacks obtained in the above section. We build a bank of two residual generators to detect and isolate the multivariate attacks $f_{AC_{1,2}}$ and $f_{DC_{1,2}}$ on the AC/DC links in the two-area system with virtual inertia, using the approach in \eqref{opt:detect} and Remark~\ref{rem:attack_isolate}. The optimal values of~\eqref{opt:detect} achieve $\gamma^{\star} = 4.440$ in the residual generator construction for detecting $f_{AC_{1,2}}$ and $\gamma^{\star} = 2.649$ in the other residual generator for $f_{DC_{1,2}}$, which implies a successful detection and isolation as indicated by Lemma~\ref{lem:detect_condition}. 

In Figure~\ref{fig:d_att_n_residuals}, the results of the residual generators under disturbances and attacks are presented. Both detectors have generated a residual signal for the presence of each FDI intrusion in the multivariate attack scenario. Besides, we can see that the resulted residual generators with designed capabilities from Theorem~\ref{the:ss_uniatt_track} can recover the attack values; the steady-state residual values in Figure~\ref{subfig:residual_apac} and Figure~\ref{subfig:residual_apdc} are equal to $ 0.44 \, \mathrm{p.u.}$ and $-0.39 \, \mathrm{p.u.}$. The residual signals are also decoupled from each other and stochastic load variations. Next, to test
the residual generators in a more realistic setting, we also provide the simulation results where there exist noises in the system process and measurements. Zero-mean Gaussian noises with the covariance 0.0009 to the frequency and 0.03 to the other states are applied \cite{Ameli2018a}. Figure~\ref{subfig:residual_apac_n} shows one instance of the residual signals and it still works effectively in detecting and tracking the attack value of $f_{AC_{1,2}}$. In the end, it should be noted that in these simulations the adjustable parameter, i.e., the pole of $a(q)$, is set to be $p=\, 0.1$ for a fast response of attack detection. The residual signal under $f_{AC_{1,2}}$ can recover the attack value from $t = 10.36 \, \mathrm{s}$ in Figure~\ref{subfig:residual_apac} before the MFD reaches the maximum value (at around $t = 10.6 \, \mathrm{s}$) in Figure~\ref{fig:freq1_a_disruptstealth}. %
This indicates that the developed residual generator can detect the FDI intrusions sufficiently fast in the inertia context. %

\begin{figure}[t!p]
	\begin{subfigure}[t]{0.49\textwidth}
		\centering
		\includegraphics[scale=0.65]{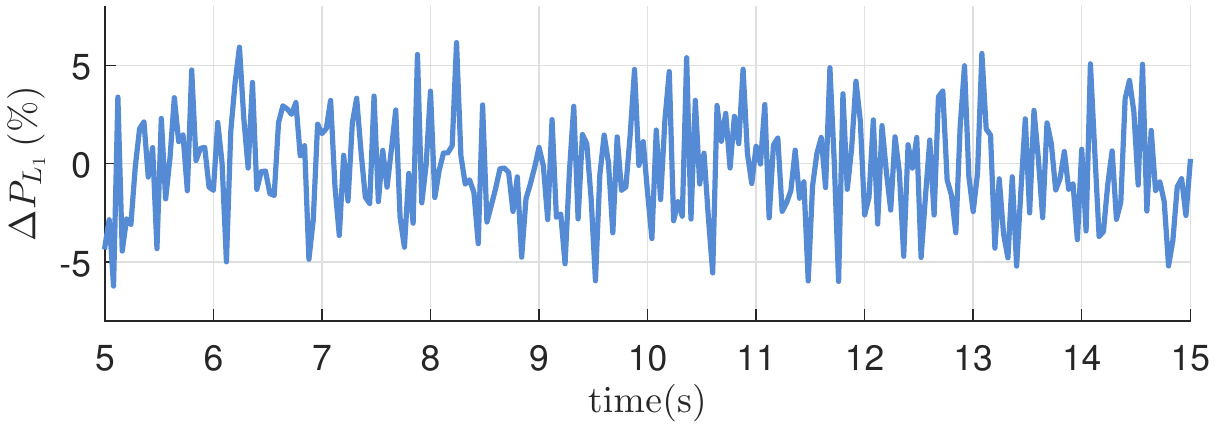}
		\caption{Load variations of Load 1 in Area 1.}\label{subfig:stoload}
	\end{subfigure}
	\\ 
	\begin{subfigure}[t]{0.49\textwidth}
		\centering
		\includegraphics[scale=0.65]{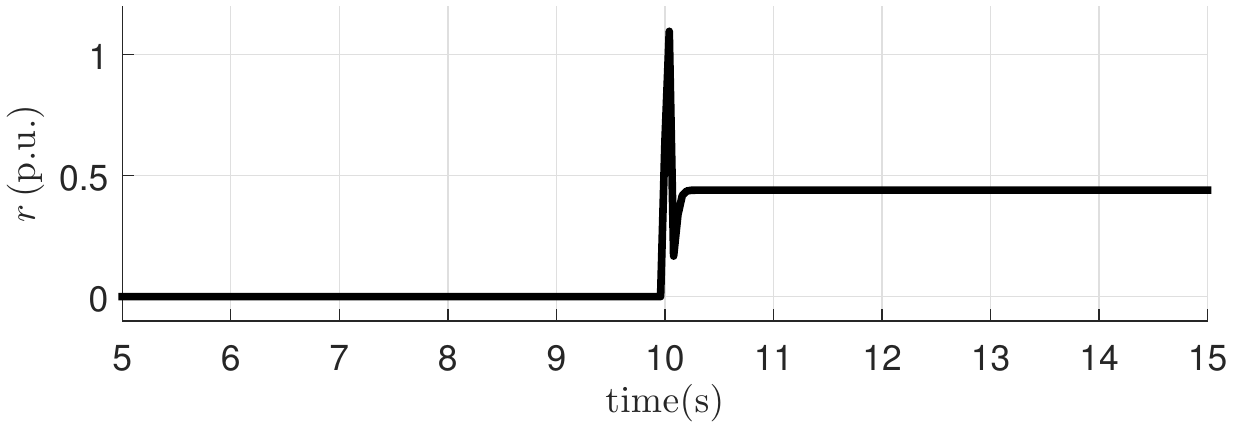}
		\caption{Residual signal under $f_{AC_{1,2}}$}\label{subfig:residual_apac}
	\end{subfigure}
	\\
	\begin{subfigure}[t]{0.49\textwidth}
		\centering
		\includegraphics[scale=0.65]{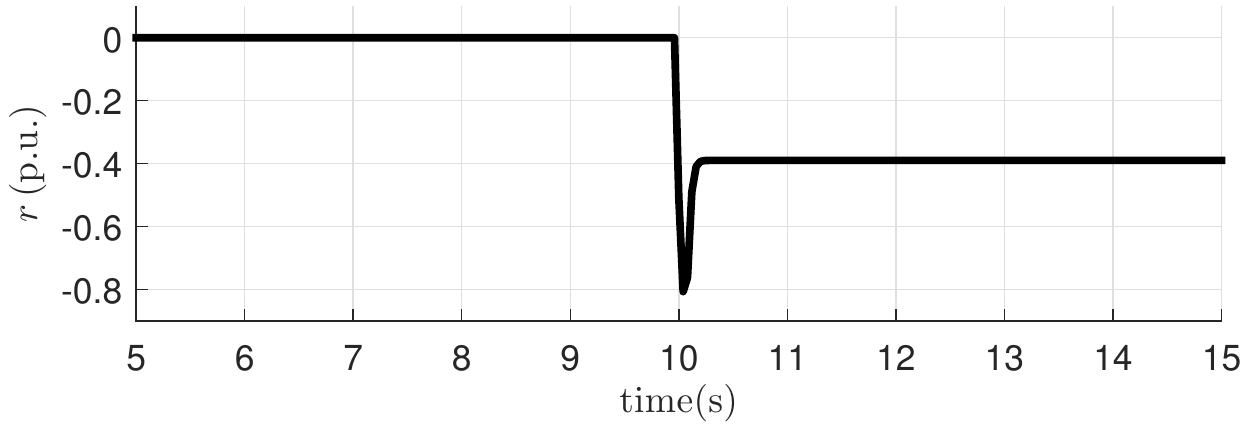}
		\caption{Residual signal under $f_{DC_{1,2}}$}\label{subfig:residual_apdc}
	\end{subfigure}
	\\
	\begin{subfigure}[t]{0.49\textwidth}
		\centering
		\includegraphics[scale=0.65]{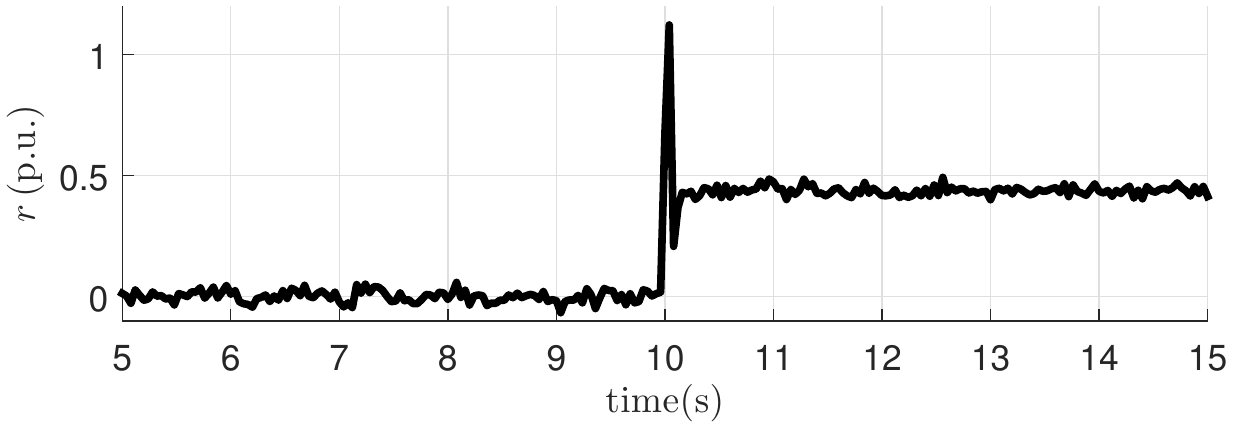}
		\caption{Residual signal under $f_{AC_{1,2}}$ and noises.}\label{subfig:residual_apac_n}
	\end{subfigure}
	\\
	\caption{Residual responses under multivariate attacks.}\label{fig:d_att_n_residuals} 
\end{figure}
%

%

\section{Conclusion} \label{sec:conclusion}

In this article, we investigated FDI attacks on the hybrid AC/DC system with virtual inertia. We offer an optimization-based vulnerability and attack impact analysis framework. Our study shows that the hybrid grid can be more vulnerable to FDI attacks. We also propose a diagnosis tool to detect, isolate and recover all the FDI intrusions. The effectiveness of these methods was validated by simulations in the two-area AC/DC system with emulated inertia. The future research includes the study of other types of cyber attacks on the hybrid grids with virtual inertia emulation capabilities. 

\bibliographystyle{IEEEtran}
\bibliography{literature}

%
%
%

\end{document}